\newcommand\br{\text{bth}}
\newcommand\0{\textbf{0}}
\newcommand\x{\textbf{x}}
\newcommand\z{\textbf{z}}
\newcommand\Z{\mathds{Z}}
\newcommand\lla{\left\langle}
\newcommand\rra{\right\rangle}
\newcommand\lk{\text{lk}}
\newcommand\bbm{\begin{bmatrix}}
\newcommand\ebm{\end{bmatrix}}
\newcommand\bmat{\begin{matrix}}
\newcommand\emat{\end{matrix}}
\newcommand\red[1]{\color{red}#1\color{black}}
\newcommand\Cyan[1]{\color{Cyan}#1\color{black}}
\newcommand\PosCr{\raisebox{-2pt}{\includegraphics[height=11pt]{PositiveCrossing.pdf}}}
\newcommand\NegCr{\raisebox{-2pt}{\includegraphics[height=11pt]{NegativeCrossing.pdf}}}
\newcommand\CBandPos{\raisebox{-3pt}{\includegraphics[height=11pt]{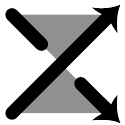}}}
\newcommand\CBandNeg{\raisebox{-3pt}{\includegraphics[height=11pt]{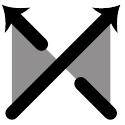}}}
\newcommand\SeifAlgPos{\raisebox{-3pt}{\includegraphics[height=11pt]{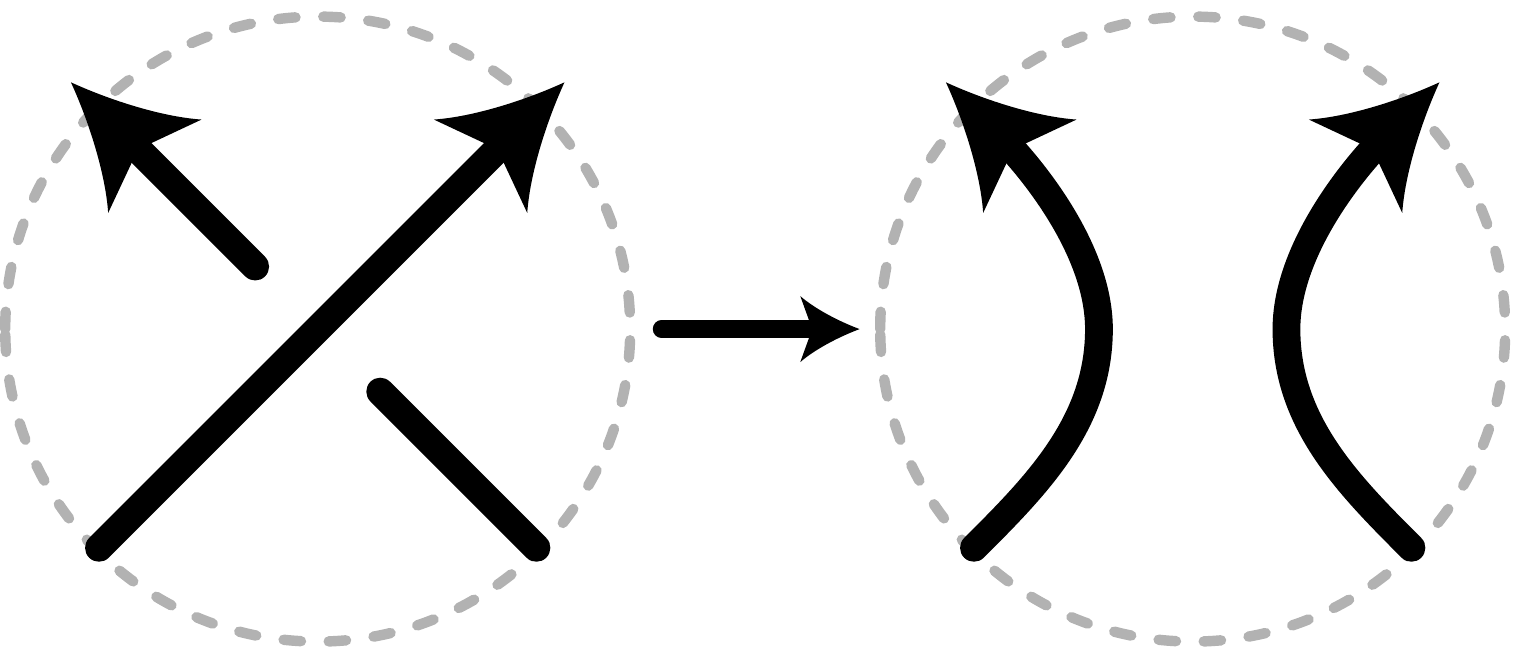}}}
\newcommand\SeifAlgNeg{\raisebox{-3pt}{\includegraphics[height=11pt]{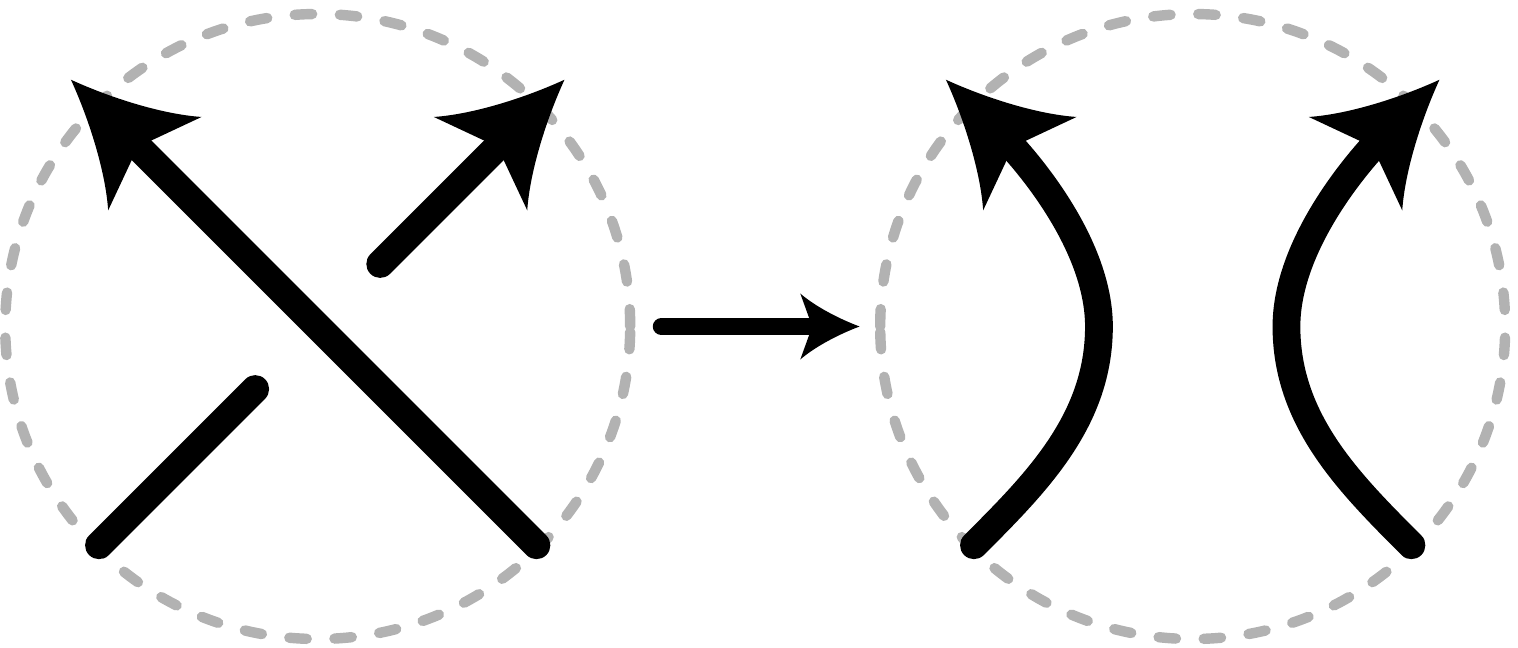}}}
\theoremstyle{plain}
\newtheorem{theorem}{Theorem}
\newtheorem{lemma}[theorem]{Lemma}
\newtheorem{prop}[theorem]{Proposition}
\newtheorem{cor}[theorem]{Corollary}
\theoremstyle{definition}
\newtheorem{definition}[theorem]{Definition}
\newtheorem{question}[theorem]{Question}
\newtheorem{problem}[theorem]{Problem}
\theoremstyle{remark}
\author{Thomas Kindred}
\address{Department of Mathematics, Wake Forest University \\
Winston-Salem, North Carolina 27109, USA} 
\email{kindret@wfu.edu}
\urladdr{www.thomaskindred.com}
\title{A simple proof of the Crowell--Murasugi theorem}
\date{\today}
\begin{document}

\maketitle

\begin{abstract}
We give an elementary, self-contained proof of the theorem, proven independently in 1958-9 by Crowell and Murasugi, that the genus of an alternating knot equals half the breadth of its Alexander polynomial, and that applying Seifert's algorithm to any alternating knot diagram gives a surface of minimal genus. %We then extend this result to virtual knots.

\end{abstract}

Every oriented knot $K\subset S^3$ bounds an oriented surface $F$ called a {\it Seifert surface}.  Such $F$ is homeomorphic to a once-punctured surface of some genus $g(F)$. The {\bf knot genus} $g(K)$ is the minimum genus among all Seifert surfaces for $K$.  

An ordered basis $(a_1,\hdots,a_n)$ for $H_1(F)$ determines an $n\times n$ {\it Seifert matrix} $V=(v_{ij})$, $v_{ij}=\lk(a_i,a_j^+)$, where $\lk$ denotes linking number and $a_j^+$ is the pushoff of (an oriented multicurve representing) $a_j$ in the positive normal direction determined by the orientations on $F$ and $S^3$.
%\footnote{This matrix represents the {\bf Seifert pairing} $\lla\cdot,\cdot\rra_+$ on $H_1(F)$ in the sense that...}
The polynomial $\det(V-tV^T)$, denoted $\Delta_K(t)$, is called the {\it Alexander polynomial} of $K$. Up to sign and degree shift, it is independent of Seifert surface and basis.  
Writing $\Delta_K(t)=a_rt^r+a_{r+1}t^{r+1}+\cdots+a_{s-1}t^{s-1}+a_st^s$ with $a_r,a_s\neq 0$, the value $s-r$ is called the {\bf breadth} of $\Delta_K(t)$; we denote this by $\br(K)$.

Given any oriented diagram $D\subset S^2$ of a knot $K\subset S^3$, {\bf Seifert's algorithm} produces a Seifert surface for $K$ as follows.  First, ``smooth" each crossing of $D$ in the way that respects orientation: \SeifAlgPos, \SeifAlgNeg. This gives a disjoint union of oriented circles on $S^2$ called the {\it Seifert state} of $D$; each circle is called a {\it Seifert circle}. Second, cap all the Seifert circles with disjoint, oriented disks, all on the same side of $S^2$.  Third, attach an oriented half-twisted band at each crossing, so that the resulting surface $F$ is oriented with $\partial F=K$, respecting orientation. %Figure \ref{Fi:SeifAlg} shows
Here is an example:
\[\includegraphics[width=\textwidth]{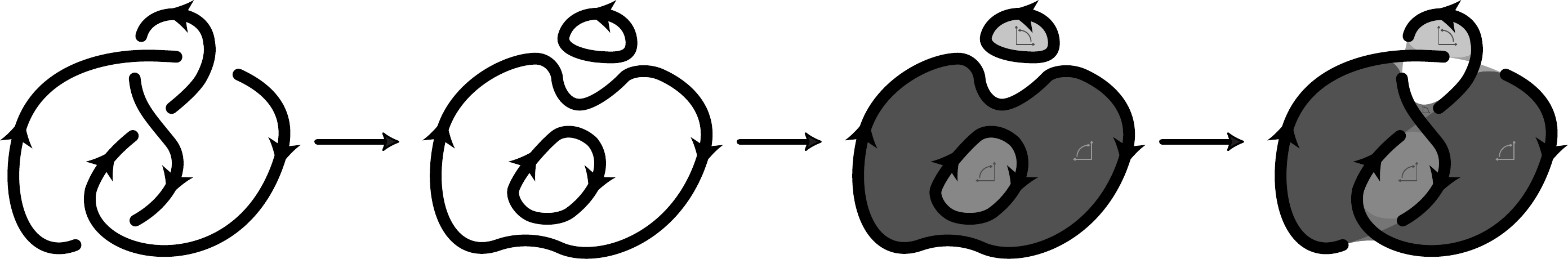}\]
The purpose of this note is to give a short, elementary, self-contained proof of the following theorem, first proven independently in 1958-9 by Crowell and Murasugi:

\begin{theorem}[\cite{crowell,mur58}]\label{T:CroMur}
If $F$ is a surface constructed via Seifert's algorithm from an alternating diagram $D$ of a knot $K$, then 
\[g(F)=g(K)=\frac{1}{2}\br(K).\]
\end{theorem}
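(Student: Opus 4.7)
The plan is to prove the theorem by establishing the chain of inequalities
\[
\tfrac{1}{2}\br(K) \;\leq\; g(K) \;\leq\; g(F) \;\leq\; \tfrac{1}{2}\br(K),
\]
which forces equality throughout. The middle inequality is immediate since $F$ is a Seifert surface for $K$. The first comes from a matrix-size bound: any Seifert matrix $V'$ for a Seifert surface $F'$ of genus $g'$ is $2g' \times 2g'$, so $\Delta_K(t) = \det(V' - tV'^T)$ has degree at most $2g'$ in $t$, hence breadth at most $2g'$; applied to a minimum-genus surface this gives $\br(K) \leq 2g(K)$.

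The substantive inequality is $g(F) \leq \tfrac{1}{2}\br(K)$. Write $c$ for the number of crossings of $D$ and $s$ for the number of Seifert circles; then $F$ is assembled from $s$ disks and $c$ twisted bands, so $\chi(F) = s - c$ and $2g(F) = c - s + 1$. The \emph{Seifert graph} $G_s$---with one vertex per Seifert circle and one edge per crossing---is a deformation retract of $F$, so $H_1(F) \cong H_1(G_s) \cong \Z^{c-s+1}$. Fixing a spanning tree $T \subseteq G_s$, the fundamental cycles $\gamma_e$ through the non-tree edges $e \notin T$ form a geometric basis of $H_1(F)$, and the Seifert matrix entries $v_{ef} = \lk(\gamma_e, \gamma_f^+)$ can be read locally from the crossings of $D$ that the cycles traverse. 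Since $\det(V^T) = \det(V)$, both the constant term of $\det(V - tV^T)$ and its coefficient of $t^{2g(F)}$ equal $\det(V)$, so $\br(\Delta_K(t)) = 2g(F)$ as soon as $\det(V) \neq 0$.

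The main obstacle---and the only place the alternating hypothesis enters---is proving $\det(V) \neq 0$. For a generic diagram the Leibniz expansion of $\det(V)$ can suffer cancellations, so a global sign-coherence argument is required. The feature I plan to exploit is the checkerboard two-coloring of $S^2 \setminus D$ dictated by the alternating structure: at each crossing, the over- and under-strand organize the four incident regions into a rigid alternating black-white pattern, and this rigidly constrains how each crossing contributes to the various $v_{ef}$. Concretely, I expect that after suitable choices of orientation and spanning tree, either the diagonal entries of $V$ share a common sign and dominate the off-diagonal contributions, or else every nonzero monomial $\sign(\sigma) \prod_i v_{i,\sigma(i)}$ in the Leibniz expansion of $\det V$ carries the same sign and cancellation is impossible. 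Either conclusion yields $\det(V) \neq 0$ and completes the proof.
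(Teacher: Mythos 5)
Your reduction is sound and matches the paper's: the inequality $\tfrac12\br(K)\leq g(K)$ from the size of a Seifert matrix, and the observation that the constant and top coefficients of $\det(V-tV^T)$ both equal $\det(V)$, so invertibility of $V$ gives $\br(K)=2g(F)$ and forces equality throughout. The problem is that you have not actually proved $\det(V)\neq 0$; you have only named two candidate mechanisms, and both fail as stated. First, the diagonal entries of $V$ do \emph{not} share a common sign for a general alternating diagram: the standard alternating diagram of the figure-eight knot has two positive and two negative crossings, and its Seifert matrix has diagonal entries $1$ and $-1$. Second, sign-coherence of the Leibniz monomials fails even in the most favorable case: for a special alternating diagram the Seifert matrix is definite, but a positive definite matrix such as $\left(\begin{smallmatrix}2&1\\1&2\end{smallmatrix}\right)$ already has Leibniz monomials $+4$ and $-1$ of opposite signs, and definite matrices need not be diagonally dominant either. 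So the ``global sign-coherence argument'' you defer to is not a routine verification; it is the entire content of the theorem, and no choice of spanning tree or orientation will rescue the two specific mechanisms you propose.

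The paper gets around this by a structural decomposition rather than a direct attack on the Leibniz expansion. The Seifert surface of an alternating diagram is a Murasugi sum of checkerboard Seifert surfaces coming from connected \emph{special} alternating link diagrams; with respect to a compatible basis the Seifert matrix is block upper triangular with the matrices of the pieces on the diagonal, so $\det(V)=\prod_i\det(V_i)$. Within each piece all crossing bands have the same sign, and the key lemma is that each $V_i$ is (positive or negative) definite --- proved not by inspecting matrix entries but geometrically: a homology class is represented by a multicurve $\alpha$ meeting each crossing band in coherently oriented arcs, whence $\x^TV_i\x=\lk(\alpha,\alpha^+)=\sum_X|\alpha\cap X|^2/2$, which is strictly positive for $\x\neq\0$. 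If you want to complete your proof, you need both ingredients: the splitting into same-sign pieces (to localize the sign coherence that your figure-eight-type examples show cannot hold globally) and a definiteness argument for each piece that does not rely on termwise positivity of the determinant expansion.
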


To prove Theorem \ref{T:CroMur}, we will show that a Seifert matrix $V$ for $F$ is invertible.  The next two results show that this indeed will suffice:

\begin{prop}\label{P:BreadthSuffices}
Let $F$ be a Seifert surface for a knot $K$.  If $\br(K)=2g(F)$, then $g(K)=g(F)=\frac{1}{2}\br(K)$.
\end{prop}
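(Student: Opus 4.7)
The plan is to deduce the proposition from the general inequality $\br(K) \le 2g(K)$, which holds for every knot. Combining that with the hypothesis $\br(K) = 2g(F)$ and the trivial bound $g(K) \le g(F)$ (which is just the definition of $g(K)$ as a minimum over all Seifert surfaces), one gets the chain
\[2g(F) = \br(K) \le 2g(K) \le 2g(F),\]
which forces equality throughout and yields $g(K) = g(F) = \tfrac{1}{2}\br(K)$.

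So the only substantive step is to prove $\br(K) \le 2g(F')$ for \emph{any} Seifert surface $F'$ of $K$. First I would note that a once-punctured genus-$g(F')$ surface has $H_1$ of rank $n := 2g(F')$, so any Seifert matrix $V'$ for $F'$ is $n \times n$. The matrix $V' - tV'^T$ then has entries that are polynomials in $t$ of degree at most $1$, so expanding $\det(V' - tV'^T)$ as the usual sum over $S_n$ shows this determinant is a polynomial in $t$ whose nonzero terms all lie in the degree range $[0, n]$. Since this determinant represents $\Delta_K(t)$ up to sign and degree shift, and the breadth is invariant under such shifts, $\br(K) \le n = 2g(F')$. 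Taking the infimum over all Seifert surfaces $F'$ yields $\br(K) \le 2g(K)$, as required.

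There is no serious obstacle here; the only subtlety worth mentioning is the well-definedness of $\br(K)$, i.e.\ the fact that computing $\det(V' - tV'^T)$ from different Seifert surfaces or bases gives the same polynomial up to sign and degree shift. This invariance (already recorded in the excerpt) is what makes the breadth-bound argument above independent of the choice of $F'$, and hence usable against $g(K)$.
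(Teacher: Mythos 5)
Your proposal is correct and follows essentially the same route as the paper: the paper's proof is exactly the observation that $\br(K)\leq 2g(F')$ for an arbitrary Seifert surface $F'$ (since $\Delta_K(t)$ can be computed from its $2g(F')\times 2g(F')$ Seifert matrix), which forces $g(F)\leq g(F')$. You merely spell out in more detail why the determinant's breadth is bounded by the matrix size, which the paper leaves implicit.
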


\begin{proof}
Let $F'$ be an arbitrary Seifert surface for $K$. One may compute $\Delta_K(t)$ from any Seifert matrix for $F'$, so $\br(K)\leq 2g(F')$. Hence $g(F)\leq g(F')$. 
\end{proof}

\begin{prop}\cite{mur96}\label{P:BreadthInv}
Let $V$ be a real $2g\times 2g$ matrix, and let $f(t)=\det(V-tV^T)$.  If $V$ is invertible, then the breadth of $f(t)$ equals $2g$.
\end{prop}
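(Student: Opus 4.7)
The plan is essentially a direct computation of the two extreme coefficients of $f(t)=\det(V-tV^T)$, since both happen to equal $\det(V)$.

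First I would observe that $f(t)$ is a polynomial of degree at most $2g$ in $t$, because each entry of the matrix $V-tV^T$ is a polynomial in $t$ of degree at most $1$, and $V-tV^T$ is $2g\times 2g$. The constant term of $f(t)$ is simply $f(0)=\det(V)$, which is nonzero by hypothesis. So the lowest-degree coefficient $a_r$ (with $r\geq 0$) is nonzero, and in fact $r=0$.

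Next I would extract the coefficient of $t^{2g}$ in $f(t)$. The $t^{2g}$ term in the expansion of $\det(V-tV^T)$ arises by taking the $-tV^T$ contribution from every entry, yielding
\[
\bigl[t^{2g}\bigr]f(t)=\det(-V^T)=(-1)^{2g}\det(V^T)=\det(V)\neq 0.
\]
Hence $s=2g$, and the breadth of $f(t)$ is $s-r=2g$, as claimed.

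There is essentially no obstacle here: the whole content is the identification of the top and bottom coefficients as $\det(V)$, combined with the invertibility hypothesis. The only point worth being careful about is that $2g$ is even, so the sign $(-1)^{2g}=1$ in the leading coefficient computation; this is why the statement assumes an even-dimensional matrix, which is appropriate since Seifert matrices come from $H_1$ of a Seifert surface and are always of even size when the boundary is a knot.
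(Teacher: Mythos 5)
Your proof is correct. It takes a slightly different route from the paper: you compute the two extreme coefficients of $f(t)$ directly, identifying the constant term as $f(0)=\det(V)$ and the coefficient of $t^{2g}$ as $\det(-V^T)=\det(V)$, both nonzero by hypothesis. The paper instead writes $V-tV^T=(VV^{-T}-tI)V^T$ and observes that $f(t)$ is a nonzero scalar multiple of the characteristic polynomial of the invertible matrix $VV^{-T}$, which is monic of degree $2g$ with nonzero constant term. The two arguments extract the same two facts; yours is more elementary and self-contained (it never needs to form $V^{-T}$, only to know $\det(V)\neq 0$), while the paper's packages the computation into a single familiar statement about characteristic polynomials. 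One small remark on your closing comment: the evenness of $2g$ is not actually needed for your argument, since the leading coefficient $(-1)^{n}\det(V^T)$ is nonzero for any $n\times n$ invertible $V$; the even size in the statement simply reflects that Seifert matrices of knots have even size, not that the sign computation would otherwise fail.
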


\begin{proof}
Denoting the transpose of $V^{-1}$ by $V^{-T}$, \begin{equation*}
f(t)=\det(V^{-T})\det(VV^{-T}-tI)
\end{equation*}
is a nonzero scalar multiple of the characteristic polynomial of the invertible matrix $VV^{-T}$, hence has breadth $2g$.%.  Hence, the degree of $f(t)$ equals $2g$; also, the constant term of $f(t)$ is nonzero, since $VV^{-T}$ is invertible, so the breadth of $f(t)$ equals $2g$.
\footnote{The converse is also true.  Indeed, if $V$ is singular, then choose an invertible matrix $P$ whose first column is in the nullspace of $V$.  Then $\det(P^TVP-t(P^TVP)^T)=\det^2(P)\cdot f(t)$ has the same breadth as $f(t)$. Further, the first column of $P^TVP$ is $\0$, so only constants appear in the first row of $P^TVP-t(P^TVP)^T$. Hence, the breadth is less than $2g$.}
\end{proof}

Next, suppose that $D\subset S^2$ is a connected oriented alternating {\it link} diagram, and that applying Seifert's algorithm to $D$ yields a checkerboard surface $F$.\footnote{That is, each Seifert circle bounds a disk in $S^2$ disjoint from the other Seifert circles.} 
Then, since $D$ is alternating, all of the crossing bands in $F$ are identical: either they all positive, \CBandPos, or they are all negative, \CBandNeg. 

\begin{lemma}\label{L:CBDef}
With the setup above, if the crossing bands in $F$ are positive, then any nonzero $\x\in\Z^{\beta_1(F)\times \beta_1(F)}$ satisfies $\x^TV\x>0$; if the crossing bands in $F$ are negative, then any such $\x$ satisfies $\x^TV\x<0$. Hence, in either case, $V$ is invertible.
\end{lemma}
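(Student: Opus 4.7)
The plan is to identify $H_1(F;\Z)$ with the cycle space $Z_1(G;\Z)\subseteq C_1(G;\Z)=\Z^{E(G)}$ of the dual graph $G\subset S^2$ (vertices $=$ Seifert disks, edges $=$ crossings), and then to show that the symmetrized Seifert form $V+V^T$ is the restriction of the standard positive-definite inner product on $\Z^{E(G)}$ to $Z_1(G)$.

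Since $F$ deformation retracts to $G$, we have $H_1(F;\Z)\cong Z_1(G;\Z)$; a nonzero $\x$ corresponds to a $1$-cycle $\gamma=\sum_e\gamma_e\cdot e$ with some $\gamma_e\neq 0$, and $\x^TV\x=\lk(\gamma,\gamma^+)$ for any realization of $\gamma$ as an immersed closed curve on $F$ with positive-normal push-off $\gamma^+$. Orientability of $F$ then forces $G$ to be bipartite: each positive half-twist band reverses the normal direction across it, so the two disks it joins must be oriented with opposite normals, giving a $2$-coloring of the vertices of $G$.

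Given any two $1$-cycles $\gamma,\gamma'\in Z_1(G)$, I would compute $\lk(\gamma,\gamma'^+)$ by projecting to $S^2$ and counting signed mixed crossings. Using the bipartite structure, the realization of $\gamma$ can be chosen so that inside each Seifert disk its strands and the strands of $\gamma'^+$ form non-crossing parallel arcs, contributing no mixed crossings. Inside each positive half-twist band $e$, a direct check in standard local coordinates shows that every one of the $|\gamma_e|\cdot|\gamma'_e|$ mixed strand pairs crosses exactly once in the projection, positively if the two strands run parallel and negatively if anti-parallel; the signed contribution from band $e$ is therefore $\gamma_e\gamma'_e$. Summing over bands and dividing by $2$ gives
\[
\lk(\gamma,\gamma'^+)\;=\;\tfrac{1}{2}\sum_e \gamma_e\gamma'_e\;=\;\lk(\gamma',\gamma^+),
\]
so $(V+V^T)(\gamma,\gamma')=\sum_e\gamma_e\gamma'_e$ is the restriction of the standard positive-definite inner product on $\Z^{E(G)}$ to $Z_1(G)$, hence itself positive definite. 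In particular $\x^TV\x=\tfrac{1}{2}\sum_e\gamma_e^2>0$ whenever $\x\neq\0$, and any $\x$ in the kernel of $V$ would force $\x^TV\x=0$, so $V$ is invertible. The all-negative case is symmetric, with reversed signs throughout.

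The main obstacle I foresee is the band-crossing sign bookkeeping: verifying in local coordinates that parallel and anti-parallel strand pairs in a positive band contribute crossings of the asserted signs, and that the planar matching of strands inside each Seifert disk really can always be arranged from the cyclic order of band attachments together with the bipartite structure. These are all local checks, but they demand careful tracking of how the positive normal direction on $F$ propagates across each half-twist.
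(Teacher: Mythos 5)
Your diagonal computation is, at its core, the same as the paper's: the paper chooses an embedded multicurve $\alpha$ representing $\x$ that meets the crossing bands in the fewest possible number of components, so that each band is traversed coherently, and then computes $\x^TV\x=\lk(\alpha,\alpha^+)=\sum_X|\alpha\cap X|^2/2>0$ band by band --- exactly your $\frac{1}{2}\sum_e\gamma_e^2$. That minimality device is what resolves the ``main obstacle'' you flag at the end: it guarantees both that the geometric count of strands in each band equals $|\gamma_e|$ (no anti-parallel pairs to cancel) and that the representative is embedded, so that inside each flat Seifert disk the arcs of $\alpha$ are disjoint and the vertical pushoff $\alpha^+$ contributes no crossings there. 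You should make this choice of representative explicit rather than deferring it to a local check; with an arbitrary immersed realization of a $1$-cycle the bookkeeping does not obviously close up.

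The genuine error is in your off-diagonal claim. The identity $\lk(\gamma,\gamma'^+)=\lk(\gamma',\gamma^+)$ is false in general: the difference $\lk(\gamma,\gamma'^+)-\lk(\gamma',\gamma^+)$ equals (up to sign) the algebraic intersection number $\gamma\cdot\gamma'$ on $F$, which is a nondegenerate skew form once $g(F)>0$. Correspondingly, the step ``the arcs of $\gamma$ and $\gamma'^+$ can be made non-crossing inside each Seifert disk'' fails whenever the endpoints of the two families of arcs interleave in the cyclic order around a disk's boundary, which is exactly when $\gamma\cdot\gamma'\neq 0$. What is true is only the symmetrized statement: $V+V^T$ restricted to the cycle space of $G$ is the restriction of the standard inner product on $\Z^{E(G)}$; this is the Gordon--Litherland pairing of $F$ \cite{gordlith}, and its definiteness for alternating checkerboard surfaces is the content of the paper's alternative one-line argument via \cite{mur87ii,greene}. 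Since the lemma needs only the quadratic form, and $\x^TV\x=\frac{1}{2}\x^T(V+V^T)\x$, your conclusion survives; but as written the displayed bilinear identity is wrong and should be replaced either by the diagonal-only computation (as in the paper) or by the claim about $V+V^T$ alone.
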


Here is a self-contained proof. A shorter argument, using \cite{greene}, follows.
\begin{proof} 
Assume without loss of generality that the crossing bands in $F$ are positive.  Among all oriented multicurves in $F$ that represent $\x$, choose one, $\alpha$, that intersects the crossing bands in $F$ in the smallest possible number of components.  Then, for each crossing band $X$ in $F$, $\alpha\cap X$ will consist of a (possibly empty) collection of coherently oriented arcs. Therefore:
\begin{equation}\label{E:DefCurve}
\x^TV\x=\lk(\alpha,\alpha^+)=\sum_{\text{crossing bands }X}\frac{|\alpha\cap X|^2}{2}\geq 0.
\end{equation}
Moreover, the inequality in (\ref{E:DefCurve}) is strict, or else $\alpha$ would be disjoint from all crossing bands, hence nullhomologous. It follows that $V$ is nonsingular, or else we would have $V\z=\0$ for some nonzero vector $\z$, giving $\z^TV\z=0$.
\end{proof}

Alternatively, denote the Gordon-Litherland pairing on $F$ by $\lla\cdot,\cdot\rra$ \cite{gordlith}. Since $D$ is alternating, this pairing is definite \cite{mur87ii,greene}.  Thus:
\begin{equation*}
\x^TV\x=\lk(\alpha,\alpha^+)=\frac{1}{2}\lk(\alpha,\alpha_+\cup\alpha_-)=\lla \x,\x\rra\neq 0.
\end{equation*}

To complete the proof of Theorem \ref{T:CroMur}, we need one more definition and lemma.  Murasugi sum, also called generalized plumbing, is a way of gluing together two spanning surfaces along a disk so as to produce another spanning surface.  We will prove that if Seifert surfaces $F_1$ and $F_2$ have invertible Seifert matrices, then any Murasugi sum of $F_1$ and $F_2$ also has invertible Seifert matrix (and conversely).

\begin{definition}\label{D:MurSum}
For $i=1,2$, let $F_i$ be a Seifert surface in a 3-sphere $S^3_i$, and choose a compact 3-ball $B_i\subset S^3_i$ that contains $F_i$ such that:
\begin{itemize}
\item $F_i\cap \partial B_i$ is a disk $U_i$ whose boundary consists alternately of arcs in $\partial F_i$ and arcs in $\text{int}(F_i)$;
\item $|\partial U_1\cap \partial F_1|=|\partial U_2\cap \partial F_2|$, where bars count components; and
\item the positive normal along $U_1$ (using the orientations on $S^3_1$ and $F_1$) points {\it into} $B_1$, and the positive normal along $U_2$ points {\it out of} $B_2$.
\end{itemize}
Choose an orientation-reversing homeomorphism $h:\partial B_1\to\partial B_2$ such that 
\begin{itemize}
\item $h(U_1)=U_2$ and
\item $h(\partial U_1\cap \partial F_1)=\text{cl}(\partial U_2\cap \text{int}(F_2))$.\footnote{It follows that $h(\text{cl}(\partial U_1\cap \text{int}(F_1)))=\partial U_2\cap \partial F_2$.}
\end{itemize}
Then $F=F_1\cup_hF_2$ is a Seifert surface in the 3-sphere $B_1\cup_hB_2$.  It is a {\bf Murasugi sum} or {\it generalized plumbing} of $F_1$ and $F_2$, denoted $F=F_1*F_2$.
\end{definition}

Note that there are generally many ways to form a Murasugi sum between two given surfaces. As an aside, we mention that the Murasugi sum construction extends easily to unoriented surfaces, and that both the oriented and unoriented notions of Murasugi sum are natural operations in many respects \cite{gab1,gab2,ozawa11,ozbpop,essence}. Here is one such respect:

\begin{lemma}\label{L:MurV}
Suppose $F=F_1*F_2$ is a Murasugi sum of Seifert surfaces, and denote the respective Seifert matrices by $V$, $V_1$, and $V_2$.  Then $V$ is invertible if and only if both $V_1$ and $V_2$ are invertible.
\end{lemma}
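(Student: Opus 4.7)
The plan is to show that, with respect to a suitably ordered basis of $H_1(F)$, the Seifert matrix $V$ takes block upper-triangular form
\[V=\begin{pmatrix}V_1 & A\\ 0 & V_2\end{pmatrix}\]
for some matrix $A$.  Once this is established, $\det V=\det V_1\cdot\det V_2$ gives the equivalence immediately: $V$ is invertible iff $\det V_1\neq 0$ and $\det V_2\neq 0$.

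First I would set $U=F_1\cap F_2$ and apply Mayer--Vietoris to $F=F_1\cup_U F_2$: since $U$ is a disk, $H_1(U)=0$, and the connecting map $H_0(U)\to H_0(F_1)\oplus H_0(F_2)$ is injective, so $H_1(F_1)\oplus H_1(F_2)\to H_1(F)$ is an isomorphism. Thus any bases $(a_1,\ldots,a_m)$ of $H_1(F_1)$ and $(b_1,\ldots,b_n)$ of $H_1(F_2)$, represented by oriented multicurves in $F_1$ and $F_2$, concatenate to a basis of $H_1(F)$.

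The heart of the argument is locating the pushoffs in $S^3=B_1\cup_h B_2$. By hypothesis, the positive normal of $F_1$ along $U$ points into $B_1$, while that of $F_2$ along $U$ points out of $B_2$, i.e.\ also into $B_1$. For a sufficiently small pushoff this gives $a_j^+\subset\text{int}(B_1)$: at $F_1\setminus U$ the pushoff stays inside $\text{int}(B_1)$, and at $U\subset\partial B_1$ it is displaced into $\text{int}(B_1)$. By contrast $b_j^+$ lies partly in $\text{int}(B_2)$ and partly in $\text{int}(B_1)$ (where $b_j$ meets $U$). Since $a_j^+$ is nullhomologous in the ball $B_1$, it bounds a surface $\Sigma\subset\text{int}(B_1)$; intersecting $\Sigma$ with $a_i$ computes $\lk(a_i,a_j^+)$ entirely within $B_1$, reproducing the entry $(V_1)_{ij}$ originally computed in $S^3_1$. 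The analogous argument in $B_2$ recovers the $V_2$ block. For the lower-left block, $b_i\subset\overline{B_2}$ is disjoint from $\Sigma\subset\text{int}(B_1)$, so $\lk(b_i,a_j^+)=0$, establishing the triangular form.

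The main obstacle I anticipate is orientation bookkeeping: verifying that the pushoff of $a_j$ relative to $F$ in $S^3$ genuinely agrees with its pushoff relative to $F_1$ in $S^3_1$, especially at points where $a_j$ crosses $U$. This hinges on the asymmetric normal conditions baked into Definition~\ref{D:MurSum} and on the fact that the gluing $h$ is orientation-reversing, so the orientations on $F_1$, $F_2$, $S^3_1$, $S^3_2$ descend consistently to $F$ and $S^3$. The Mayer--Vietoris and ball-separation steps are routine.
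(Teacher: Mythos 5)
Your proposal is correct and takes essentially the same approach as the paper: both derive the block upper-triangular form $V=\begin{pmatrix}V_1&A\\0&V_2\end{pmatrix}$ from the normal-direction conditions in the definition of Murasugi sum (the pushoff $a_j^+$ lies in $\mathrm{int}(B_1)$ while each $b_i$ lies in $B_2$, killing the lower-left block) and then conclude via $\det V=\det V_1\det V_2$. Your Mayer--Vietoris step and your verification that the diagonal blocks genuinely reproduce $V_1$ and $V_2$ are details the paper leaves implicit.
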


\begin{proof}
Denote $V=(v_{ij})$. We may assume that $V$ is taken with respect to a basis $(a_1,\hdots,a_r,b_1,\hdots,b_s)$ for $H_1(F)$, where $(a_1,\hdots,a_r)$ is a basis for $H_1(F_1)$ and $(b_1,\hdots,b_s)$ is a basis for $H_1(F_2)$.  Then $V$ is a block matrix of the form $V=$\scalebox{.7}{$\bbm V_1&A\\B&V_2 \ebm$}.  In fact, we claim that $B=0$, i.e.
\begin{equation}\label{E:SeifBlock}
V=\bbm V_1&A\\0&V_2 \ebm.
\end{equation}
To see this, let $\alpha_j\subset F_1$ represent $a_j$ and let $\beta_i\subset F_2$ represent $b_i$ for arbitrary $1\leq j\leq r$, $1\leq i\leq s$.  Then $v_{ij}=\lk (\beta_i,\alpha_j^+)=0$ because, using the notation and setup from Definition \ref{D:MurSum}, $\alpha_j^+\subset\text{int}(h(B_1))$ and $\beta_i\subset B_2$.  From (\ref{E:SeifBlock}), we have $\det(V)=\det(V_1)\det(V_2)$,\footnote{This follows from the formula $\displaystyle\det(V)=\!\!\!\!\sum_{\sigma\in S_{r+s}}\!\!\!\text{sign}(\sigma)\prod_{i=1}^{r+s}v_{i\sigma(i)}$ and the pigeonhole principle.} so the result follows.
\end{proof}

Now we can prove Theorem \ref{T:CroMur}:

\begin{proof}[Proof of Theorem \ref{T:CroMur}]
Let $F$ be a surface constructed via Seifert's algorithm from an alternating diagram $D$ of a knot $K$.  Then $F$ is a Murasugi sum of checkerboard Seifert surfaces from connected alternating link diagrams.  

Lemma \ref{L:CBDef} implies that all of these checkerboard surfaces have invertible Seifert matrices, so Lemma \ref{L:MurV} implies that $F$ also has an invertible Seifert matrix $V$.  Since $K$ is a knot, the size of $V$ is $\beta_1(F)=2g(F)$.  Thus, by Propositions \ref{P:BreadthSuffices} and \ref{P:BreadthInv},
\[\pushQED{\qed}
g(F)=g(K)=\frac{1}{2}\br(K).\qedhere\]
\end{proof}

The proof above shows more generally:

\begin{theorem}\label{T:gen}
Let $F$ be a Seifert surface for a knot $K$.  If $F$ is a Murasugi sum of checkerboard surfaces from connected alternating link diagrams, then $g(K)=g(F)=\frac{1}{2}\br(K)$.  
\end{theorem}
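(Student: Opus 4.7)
The plan is to observe that the proof of Theorem \ref{T:CroMur} uses nothing about $F$ beyond the very hypothesis given here: that $F$ is a Murasugi sum of checkerboard surfaces from connected alternating link diagrams. Thus the same chain of reasoning applies verbatim, provided one checks that Lemma \ref{L:MurV}, stated for a single Murasugi sum of two surfaces, extends to an iterated Murasugi sum of any number of summands.

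The steps I would carry out, in order, are as follows. First, apply Lemma \ref{L:CBDef} to each checkerboard summand to conclude that each has an invertible Seifert matrix. Second, viewing the iterated Murasugi sum as a sequence of binary Murasugi sums, invoke Lemma \ref{L:MurV} repeatedly, by induction on the number of summands, to deduce that $F$ itself has an invertible Seifert matrix $V$. Third, use that $K$ is a knot, so $\partial F$ is connected and $\beta_1(F)=2g(F)$, making $V$ a $2g(F)\times 2g(F)$ matrix. Fourth, apply Proposition \ref{P:BreadthInv} to obtain $\br(K)=2g(F)$. Finally, apply Proposition \ref{P:BreadthSuffices} to conclude $g(K)=g(F)=\frac{1}{2}\br(K)$.

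The only non-cosmetic point to verify is that the iteration in the second step is well-posed. Given $F=F_1*\cdots*F_n$, I want to decompose $F$ as $(F_1*\cdots*F_{n-1})*F_n$ so that the partial sum $F_1*\cdots*F_{n-1}$ is itself a Seifert surface to which Lemma \ref{L:MurV} applies. This associativity-like property is essentially built into Definition \ref{D:MurSum}: the $3$-ball used to form the $n$-th plumbing is disjoint from the interiors of the ones used at earlier stages, so each partial sum naturally inherits the structure of a Seifert surface in a $3$-sphere, with a basis for $H_1$ that splits compatibly with the block-triangular form in (\ref{E:SeifBlock}). Once that routine check is carried out, Theorem \ref{T:gen} follows by the same logic used to prove Theorem \ref{T:CroMur}.
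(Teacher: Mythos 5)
Your proposal matches the paper's own argument: the paper explicitly derives Theorem \ref{T:gen} by observing that the proof of Theorem \ref{T:CroMur} uses only the Murasugi-sum hypothesis, combining Lemma \ref{L:CBDef}, an iterated application of Lemma \ref{L:MurV}, and Propositions \ref{P:BreadthSuffices} and \ref{P:BreadthInv} exactly as you describe. Your extra care about the well-posedness of the iterated sum is a sound (and slightly more explicit) version of what the paper leaves implicit, not a different route.
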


In particular, a knot diagram is called {\bf homogeneous} if it is a {\it $*$-product}, i.e. {\it diagrammatic Murasugi sum}, of special alternating link diagrams.  By definition, Theorem \ref{T:gen} applies to all such diagrams (c.f. \cite{c89} Corollary 4.1):

\begin{cor}
Let $F$ be a surface constructed via Seifert's algorithm from a {\it homogeneous} diagram of a knot $K$.  Then $g(F)=g(K)=\frac{1}{2}\br(K)$.
\end{cor}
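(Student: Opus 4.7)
The plan is to deduce the corollary directly from Theorem \ref{T:gen} by unpacking the definition of \emph{homogeneous}. By hypothesis, the diagram of $K$ is a $*$-product of special alternating link diagrams, so two ingredients need to be verified: first, that the $*$-product of diagrams intertwines with Seifert's algorithm and the surface-theoretic Murasugi sum; and second, that Seifert's algorithm applied to a special alternating link diagram returns a checkerboard surface.

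First I would handle the identification of the two Murasugi sum operations. Recall that a diagrammatic $*$-product of $D_1$ and $D_2$ is built by gluing their diagrams along a disk in $S^2$ whose boundary alternately crosses strands of $D_1$ and $D_2$ in the required pattern. Applying Seifert's algorithm to the result smooths crossings of $D_1$ inside one hemisphere, smooths crossings of $D_2$ inside the other, and caps and bands each side independently. The gluing disk in $S^2$ becomes precisely a disk $U$ on the resulting Seifert surface whose boundary alternates between arcs in $\partial F$ and arcs in the interior of $F$, matching the setup in Definition \ref{D:MurSum}. Iterating over the entire $*$-product decomposition expresses the surface $F$ as an iterated Murasugi sum of the Seifert surfaces of the special alternating pieces. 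This is a standard observation (cf.\ \cite{gab1,gab2}); the verification amounts to checking the local picture near the splitting disk, which I would confirm by inspection.

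Next I would verify that for a connected special alternating link diagram $D_i$, the surface $F_i$ produced by Seifert's algorithm is a checkerboard surface in the sense of the footnote after Lemma \ref{L:CBDef}, namely that each Seifert circle bounds a disk in $S^2$ disjoint from the other Seifert circles. This is essentially the definition of \emph{special}: the Seifert circles of $D_i$ are nested so that the Seifert disks can be taken to lie pairwise disjointly in $S^2$, with the crossing bands alternately on the two sides of $S^2$; alternation of $D_i$ then forces all bands to have a consistent sign, so $F_i$ is one of the two checkerboard surfaces of $D_i$.

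Having established these two points, $F$ is exhibited as a Murasugi sum of checkerboard surfaces of connected alternating link diagrams, and Theorem \ref{T:gen} yields the conclusion $g(F)=g(K)=\frac{1}{2}\br(K)$. The main obstacle is the first step, which requires a careful check that the diagrammatic and surface-theoretic notions of Murasugi sum correspond under Seifert's algorithm; once the local picture near the splitting sphere is drawn, however, this is a purely combinatorial verification and no new analytic input beyond Theorem \ref{T:gen} is needed.
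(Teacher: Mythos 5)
Your proposal is correct and follows exactly the route the paper takes: the paper's entire proof is the remark that, by the definition of a homogeneous diagram as a $*$-product of special alternating link diagrams, Theorem \ref{T:gen} applies directly. The two verifications you spell out (that the diagrammatic $*$-product realizes the surface-theoretic Murasugi sum under Seifert's algorithm, and that special alternating pieces yield checkerboard surfaces) are precisely what the paper leaves implicit in the phrase ``by definition.''
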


We note another consequence of Lemma \ref{L:MurV}, in combination with:

\begin{theorem}[Harer's conjecture \cite{harer}; Corollary 3 of \cite{girgoo}]
Any fiber surface in $S^3$ can be constructed by plumbing and deplumbing Hopf bands.  
\end{theorem}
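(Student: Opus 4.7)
This is Harer's conjecture, established by Giroux and Goodman via the Giroux correspondence between open book decompositions and contact structures. I would follow their approach. The first step is translation: every fiber surface $F\subset S^3$ with monodromy $\phi$ is a page of an open book decomposition of $S^3$ (with binding $\partial F$), and the Thurston--Winkelnkemper construction associates to any such open book a contact structure $\xi(F,\phi)$ supported by it; conversely, every open book of $S^3$ arises from a fiber surface.

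Next I would identify the plumbing operations at the level of open books. Plumbing a positive (resp.\ negative) Hopf band to $F$ corresponds to a \emph{positive} (resp.\ \emph{negative}) \emph{stabilization} of the associated open book: attach a $1$-handle to the page and precompose the monodromy with a right-handed (resp.\ left-handed) Dehn twist along a curve crossing the cocore once. A theorem of Giroux says that positive stabilization preserves the supported contact structure, whereas negative stabilization generally does not.

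Then I would invoke Giroux's stabilization theorem: any two open books of a closed oriented $3$-manifold that support isotopic contact structures become isotopic after finitely many positive stabilizations on each side. Combined with Eliashberg's classification results on $S^3$ (a unique tight contact structure, and overtwisted structures classified up to isotopy by a homotopy invariant of plane fields), this reduces the problem to the following: after sufficiently many negative stabilizations, any fiber surface $F$ and the disk open book of the unknot support the \emph{same} (necessarily overtwisted) contact structure on $S^3$. Once that is arranged, positive stabilizations alone connect them by Giroux's theorem, and the entire equivalence is read off as a sequence of Hopf band plumbings and deplumbings between $F$ and a disk.

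The main obstacle is the contact-topological machinery itself: the Giroux correspondence, his stabilization theorem, and Eliashberg's classification of overtwisted structures are substantial results, and no fully elementary proof of Harer's conjecture is known. The technical heart of the Giroux--Goodman argument is controlling the homotopy class of the plane field under negative stabilizations in order to match the two open books before Giroux's stabilization theorem can be applied; doing this carefully is what forces the introduction of both plumbings \emph{and} deplumbings in the statement, rather than plumbings alone.
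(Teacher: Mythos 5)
The paper does not prove this statement; it is imported verbatim as an external result, cited to Harer's original paper and to Corollary 3 of Giroux--Goodman, and then used as a black box to deduce the corollary about fiber surfaces. So there is no in-paper argument to compare yours against. That said, your outline is an accurate account of how Giroux and Goodman actually prove it: the dictionary between fiber surfaces and open books, the identification of positive (resp.\ negative) Hopf plumbing with positive (resp.\ negative) stabilization, Giroux's common-positive-stabilization theorem for open books supporting isotopic contact structures, and Eliashberg's classification of contact structures on $S^3$, with the homotopy class of the plane field being the invariant one must first adjust by negative stabilizations (which is exactly why deplumbings, not just plumbings, appear in the statement). Two caveats: as written this is a roadmap rather than a proof, since each ingredient (the Giroux correspondence, the stabilization theorem, Eliashberg's classification) is itself a major theorem you are invoking without proof; and you should be explicit that Giroux--Goodman's actual mechanism is their result that two open books of a closed oriented $3$-manifold admit a common stabilization if and only if their associated plane fields are homotopic, from which the $S^3$ statement follows by arranging that homotopy. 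Within the context of this paper, which deliberately treats the theorem as a citation, your level of detail is more than what is required, and nothing you wrote conflicts with the cited source.
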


\begin{cor}
If $F$ is a fiber surface spanning a knot $K\subset S^3$, then  
\[g(F)=g(K)=\frac{1}{2}\br(K).\] 
\end{cor}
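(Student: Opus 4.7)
The plan is to apply the same strategy used throughout the paper: show that $F$ has an invertible Seifert matrix $V$ of size $2g(F)$, and then conclude via Propositions \ref{P:BreadthSuffices} and \ref{P:BreadthInv}. By the Giroux--Goodman theorem (Harer's conjecture), $F$ can be obtained from a disk by a finite sequence of Hopf plumbings and Hopf deplumbings. Since Hopf plumbing is a special case of Murasugi sum (the gluing disk is a square), Lemma \ref{L:MurV} applies to every move in this sequence, so it suffices to show that invertibility of the Seifert matrix is preserved at each step.

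For the base case, a disk has empty Seifert matrix, which is vacuously invertible. For the inductive step, I would observe that any Hopf band $H$ satisfies $\beta_1(H)=1$, and its Seifert matrix with respect to the generator of $H_1(H)$ is the $1\times 1$ matrix $(\pm 1)$, hence invertible. Therefore, Lemma \ref{L:MurV} applied to a Hopf plumbing yields that the plumbed surface has invertible Seifert matrix if and only if the surface being plumbed upon does. Because Lemma \ref{L:MurV} is an iff, the same equivalence holds for the inverse operation of deplumbing. Inducting along the Giroux--Goodman sequence, one concludes that $F$ itself has an invertible Seifert matrix $V$.

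Since $K$ is a knot, $V$ has size $\beta_1(F)=2g(F)$. Proposition \ref{P:BreadthInv} then gives $\br(K)=2g(F)$, and Proposition \ref{P:BreadthSuffices} yields $g(K)=g(F)=\tfrac12\br(K)$, completing the proof.

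There is no real obstacle here: all the genuinely hard work is subsumed by the Giroux--Goodman theorem. The only care required on our part is twofold: first, to verify that the $1\times 1$ Seifert matrix of a Hopf band is $(\pm1)$ rather than $(0)$ (a one-line linking-number computation between the core of the band and its positive pushoff); and second, to note that deplumbing inherits the iff from Lemma \ref{L:MurV} automatically, so no separate argument is needed to handle the ``deplumb'' steps in the Giroux--Goodman decomposition.
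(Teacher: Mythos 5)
Your proof is correct and is exactly the argument the paper intends (the paper leaves this corollary's proof implicit, presenting it as an immediate consequence of Lemma \ref{L:MurV} together with the Giroux--Goodman theorem): induct along the Hopf plumbing/deplumbing sequence, using that a Hopf band has Seifert matrix $(\pm1)$ and that the ``if and only if'' in Lemma \ref{L:MurV} handles the deplumbing steps. Nothing is missing.
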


\begin{figure}
\begin{center}\labellist
\tiny\hair 4pt
\pinlabel {de-plumb} [b] at 339 170
\pinlabel {1 band} [t] at 337 175
\pinlabel {isotope} [b] at 727 170
\pinlabel {de-plumb} [b] at 542 -195
\pinlabel {3 bands} [t] at 542 -190
\endlabellist
\includegraphics[height=.25\textwidth]{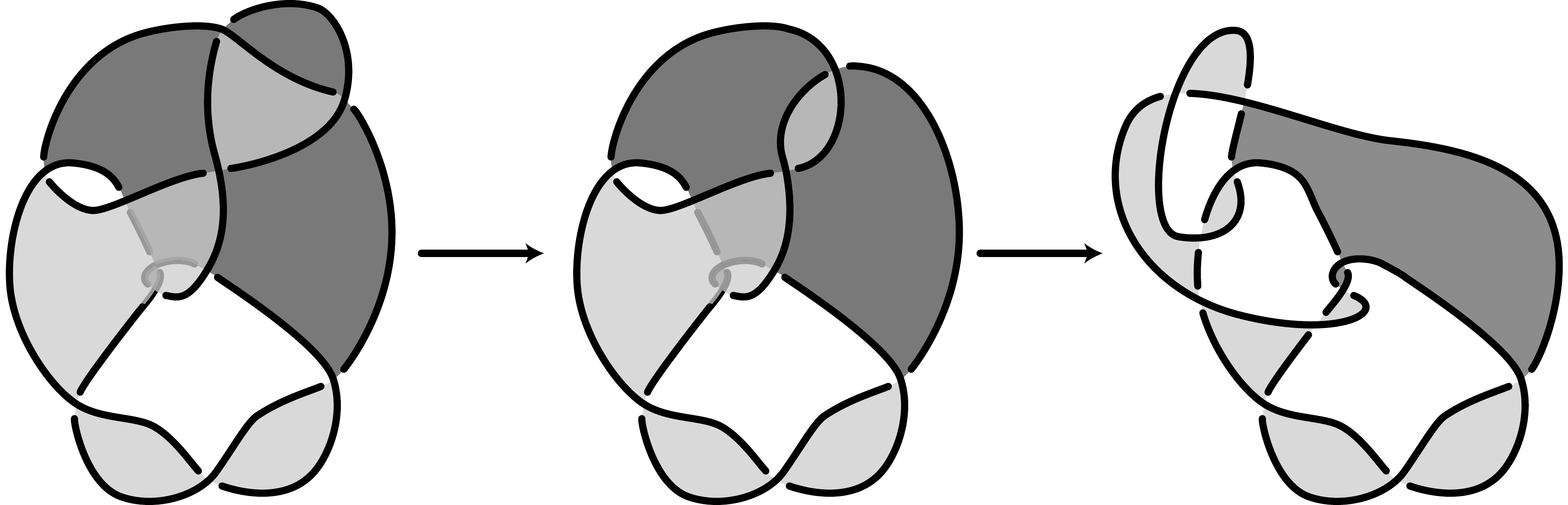}
\includegraphics[height=.25\textwidth]{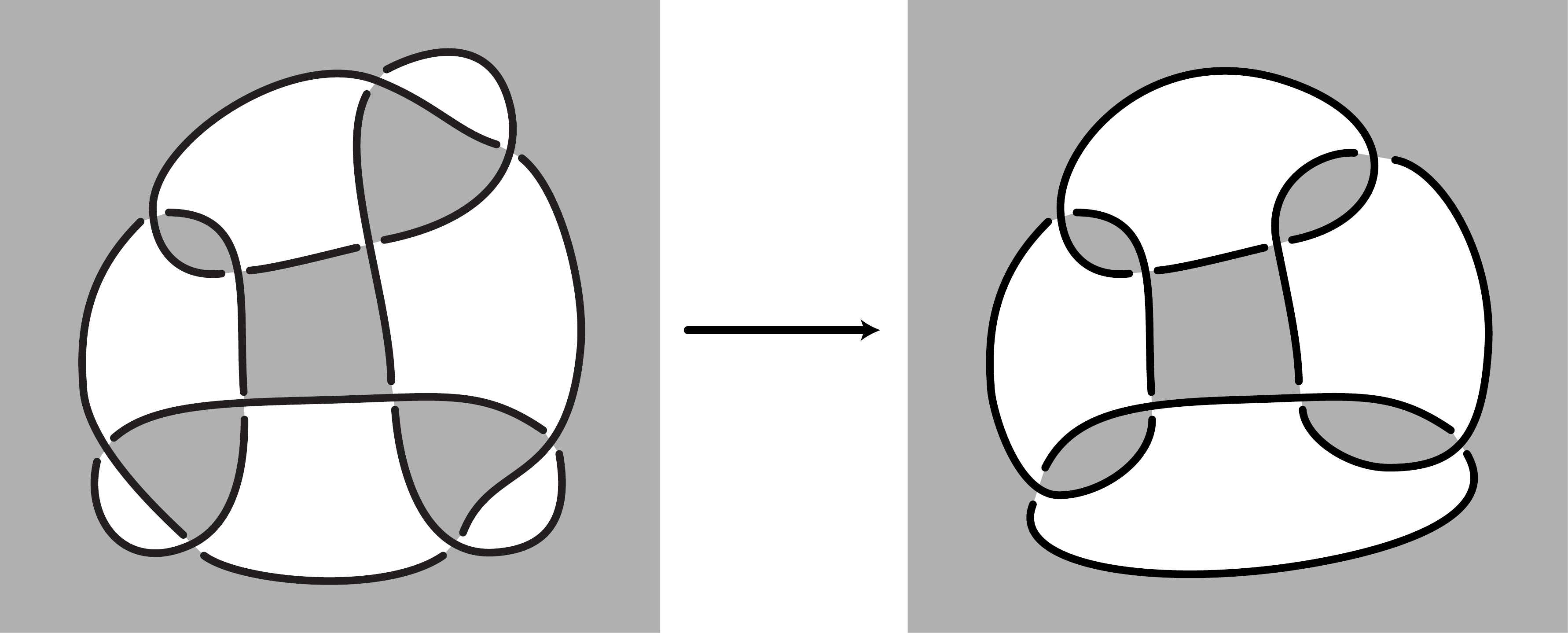}
\caption{Deplumbing Hopf bands from minimal genus Seifert surfaces for the knots $11n67$ and $11n73$}
\label{Fi:deplumb}
\end{center}
\end{figure}

We close by considering knots $K$ with $g(K)>\frac{1}{2}\br(K)$.  The simplest such knots have 11 crossings.  There are seven of them \cite{knotinfo}: the Conway knot $11n34$ has genus three, as do $11n45$, $11n73$, and $11n152$, while the Kinoshita-Terasaka knot $11n42$ has genus two, as do $11n67$ and $11n97$.  Lemma \ref{L:MurV} implies that if one takes a minimal genus Seifert surface for any one of these knots and de-plumbs (i.e. decomposes it as a nontrivial Murasugi sum),\footnote{Beware: surfaces may admit distinct de-plumbings \cite{essence}.  Still, Lemma \ref{L:MurV} implies that this sentence is true for {\it any} de-plumbing of such a surface.} then at least one of the resulting factors will have singular Seifert matrix. Also, by Theorem 1 of \cite{gab2}, all of these surfaces will have minimal genus. This raises the following natural problem:

\begin{problem}\label{P:Sing}
Characterize or tabulate those Seifert surfaces $F$ which (i) have minimal genus, (ii) do not deplumb,\footnote{That is, any decomposition of $F$ as a Murasugi sum $F=F_1*F_2$ has $F_1$ or $F_2$ as a disk.} and (iii) have singular Seifert matrices.
\end{problem}

Interestingly, for each of the four aforementioned 11-crossing knots of genus three, de-plumbing a minimal genus Seifert surface gives three Hopf bands and the planar pretzel surface $P_{2,2,-2,-2}$, which has Seifert matrix 
\[\bbm 2&-1&0\\ -1&0&1\\0&1&-2 \ebm,\]
and doing this for any of the three aforementioned 11-crossing knots of genus two gives one Hopf band and a surface of genus one that has Seifert matrix
\[\bbm 0&1&0\\ 1&0&-2\\0&-1&0 \ebm.\]
See Figure \ref{Fi:deplumb}. Another simple example of the type of surface referenced in Problem \ref{P:Sing} is the planar pretzel surface $P_{4,4,-2}$, which has Seifert matrix
\[\bbm 4&-2\\-2&1\ebm.\]
In particular, each of these simplest examples spans a link of multiple components. % This motivates:

\begin{question}
Does there exist a {\it knot} $K$ with $g(K)>\frac{1}{2}\br(K)$ with a minimal genus Seifert surface $F$ that does not deplumb?
\end{question}

\end{document}